\pdfoutput=1

\documentclass[preprint,12pt,3p]{elsarticle}


\usepackage{graphicx}

\usepackage{amssymb, amsmath}
\usepackage{amsthm}
\usepackage{color}
\usepackage{enumitem}






\newtheorem{theorem}{Theorem}[section]
\newtheorem{lemma}[theorem]{Lemma}

\theoremstyle{definition}
\newtheorem{definition}[theorem]{Definition}

\newtheorem{example}[theorem]{Example}

\theoremstyle{remark}
\newtheorem{remark}[theorem]{Remark}

\begin{document}

\begin{frontmatter}

\title{On forced oscillations in groups\\ of interacting nonlinear systems}

\author{Ivan Polekhin}


\ead{ivanpolekhin@gmail.com}



\begin{abstract}
Consider a periodically forced nonlinear system which can be presented as a collection of smaller subsystems with pairwise interactions between them. Each subsystem is assumed to be a massive point moving with friction on a compact surface, possibly with a boundary, in an external periodic field. We present sufficient conditions for the existence of a periodic solution for the whole system. The result is illustrated by a series of examples including a chain of strongly coupled pendulums in a periodic field.
\end{abstract}

\begin{keyword}
periodic solution, Euler-Poincar\'{e} characteristic, nonlinear system, coupled pendulums, nonlinear lattice
\end{keyword}

\end{frontmatter}


\section{Brief introduction}
\label{sec1}
The phenomenon of forced oscillations has been studied since at least 1922, when G.\,Hamel proved~\cite{hamel1922erzwungene} that the equation describing the motion of a periodically forced pendulum have at least one periodic solution. Various results, which generalize and develop~\cite{hamel1922erzwungene}, have appeared in the literature since this paper; see, for example,~\cite{ benci1990periodic, furi1995second, furi1991forced, furi1990existence, mawhin2000global}.  Forced oscillations in a system of coupled planar pendulums and its generalizations has been studied in~\cite{marlin1968periodic, mawhin1970g} under the assumption of certain symmetry properties for the forcing terms. The existence and multiplicity of periodic solutions for coupled systems also discussed in~\cite{drabek1987periodic}.

At the same time, a nonlinear lattice is a cornerstone model in nonlinear physics and it is widely used for analytical and computational purposes. See, for example,~\cite{kartashov2011solitons, toda2012theory, flach1998discrete, marin1996breathers}. In our work we present a result which lies in the intersection of the both mentioned research areas and can be useful for studying forced oscillations in a nonlinear lattice and its generalizations. The result given in the paper continues a previously reported result~\cite{polekhin2015na}.

 In the paper the following systems are considered. Let us have several compact smooth manifolds, possibly with boundaries, with non-zero Euler-Poincar\'{e} characteristics. Suppose that for each manifold there is a massive point moving on it with viscous friction. All points are in an external periodic field and may interact with each other. It is allowed that the interactions may be of different types and also may be arbitrarily strong. We present sufficient conditions for the existence of a periodic solution for such a system.

We would like to note that when the considered manifolds are closed, it isn't hard to prove that there exists a periodic solution in the system: it directly follows from the Lefschetz-Hopf theorem. Yet in applications it appears to be useful to consider manifolds with boundaries to prove the existence of a periodic solution and estimate it. One of the examples of such kind, which we consider further below, is a widely used in nonlinear studies model of coupled oscillators on a line. Therefore, our result generalizes the approach based on an application of the Lefschetz-Hopf theorem to the important case of compact manifolds with boundaries. 

Our result is based on a theorem by R.\,Srzednicki, K.\, W{\'o}jcik, and P.\,Zgliczy{\'n}ski~\cite{srzednicki2005fixed} and provides an illustrative geometrical approach to study periodical oscillations. Since the result is presented in a coordinate-free form, it also avoids possible shortcomings of purely analytical approaches and can be applied to the systems with complex topology of the phase space.

The main theorem is illustrated by a series of examples from mechanics including a system of an arbitrary number of strongly coupled pendulums in a periodic external field.

\section{Main result}
\label{sec2}

\subsection{Governing equations}
\label{subsec2_1}
The equations introduced in this subsection --- which we are going to use further below in the paper --- generalize the governing equations for a mechanical system consisting of massive points moving with friction-like interaction on compact surfaces. For the sake of simplicity, we assume that all manifolds and considered functions are smooth (i.e. $C^\infty$). 

Let $M_i$ be a compact connected one- or two-dimensional manifold (possibly with a boundary), where $i = 1,...,n$. For all $i=1,...,n$, there is a point moving in an external force field on the manifold $M_i$. We also assume that there is a friction-like force acting on the point.

In our further consideration, we will study the behavior of the system in vicinities of $\partial M_i$ and it will be convenient to consider enlarged manifolds $M^+_i$. Let $M^+_i$ be a boundaryless connected manifold such that $M_i \subset M^+_i$, $\dim M_i = \dim M_i^+$. Also suppose that every manifold $M_i^+$ is equipped with a Riemannian metric $\langle \cdot, \cdot \rangle_i$. 

If the points do not interact, one can consider the following independent equations of motion
\begin{equation}
\label{premain}
\nabla^i_{\dot q_i} \dot q_i = f_i(t, q_i, \dot q_i) + f_i^{friction}(t, q_i, \dot q_i), \quad i = 1,...,n,
\end{equation}
where $\nabla^i$ means the covariant differentiation with respect to the corresponding metric $\langle \cdot, \cdot \rangle_i$ for the $i$-th point moving on $M^+_i$; $f_i \colon \mathbb{R} / T\mathbb{Z} \times TM_i^+ \to TM_i^+$ and $f_i(t, q_i, \dot q_i) \in T_{q_i}M_i^+$ for any $t$, $q_i$, $\dot q_i$;  $f_i^{friction} \colon \mathbb{R} / T\mathbb{Z} \times TM_i^+ \to TM_i^+$ corresponds to a friction-like force.

As we said before,~(\ref{premain}) includes the case of a mechanical system of massive points in an external field. Indeed, in this case $\langle \cdot, \cdot \rangle_i$ is the corresponding kinetic metric and, for given $t$, $q_i$ and  $\dot q_i$, $f_i(t, q_i, \dot q_i)$ and $f_i^{friction}(t, q_i, \dot q_i)$ are the dual vectors to the corresponding generalized forces. Note that from~(\ref{premain}) it follows that if there is no forces acting on the $i$-th point then~(\ref{premain}) becomes $\nabla^i_{\dot q_i} \dot q_i = 0$, which is the equation of the geodesic motion.

In general form, when the interactions are took into account, we consider the following equations of motion
\begin{equation}
\label{FirstEq}
\nabla^i_{\dot q_i} \dot q_i = f_i(t, q_i, \dot q_i) + f_i^{friction}(t, q_i, \dot q_i) + f^{interaction}_i(t, q_1, \dot q_1, ..., q_n, \dot q_n), \quad i = 1,...,n,
\end{equation}
where $f^{interaction}_i \colon \mathbb{R} / T\mathbb{Z} \times TM_1^+ \times ... \times TM_n^+ \to TM_i^+$ and $f^{interaction}_i(t, q_1, \dot q_1, ..., q_n, \dot q_n) \in T_{q_i}M_i^+$ for any $t$, $q_j$, $\dot q_j$, $j = 1,...,n$. Here the functions $f^{interaction}_i$, being defined on $\mathbb{R} / T\mathbb{Z} \times TM_1^+ \times \dots \times TM_n^+$, may describe more general forces than the pairwise interactions usually considered in applications.

As usual, we say that the system has a $T$-periodic solution if there are functions $q_i~\colon~\mathbb{R}~\to~M_i^+$ which satisfy~(\ref{FirstEq}) and $q_i(t + T) = q_i(t)$, for all $t \in [0, T)$, $i = 1,...,n$.

Finally, rewrite~(\ref{FirstEq}) as follows
\begin{equation}
\label{MainEq}
\begin{aligned}
&\dot q_i = p_i,\\
&\nabla^i_{p_i} p_i = f_i(t, q_i, p_i) + f_i^{friction}(t, q_i, p_i) + f^{interaction}_i(t, q_1, p_1, ..., q_n, p_n),
\end{aligned}\quad i=1,...,n.
\end{equation}
In the rest of the paper, we will assume that $f_i^{friction}$, $f_i$ and $f_i^{interaction}$ satisfy the following conditions:
\begin{itemize}
\item[(H1)] There exist constants $d_i > 0$ such that 
\begin{equation}
\label{GammaEq}
\sup_{\substack{t \in [0, T],\, q_i \in M_i \\ \langle p_i, p_i \rangle_i > d_i}} \frac{\langle f_i^{friction}(t, q_i, p_i), p_i \rangle_i}{\langle p_i, p_i \rangle_i} < 0, \quad i=1,...,n.
\end{equation}
\item[(H2)] The functions $f_i$ and $f_i^{interaction}$ are bounded for all $i=1,...,n$.
\end{itemize}
\begin{remark}
If we consider a mechanical system of massive points moving on surfaces in $\mathbb{R}^3$ with viscous friction with arbitrarily small friction coefficients then~(\ref{GammaEq}) is satisfied.
\end{remark}
\begin{lemma}
\label{Lemma-I}
Suppose that conditions (H1) and (H2) are satisfied, then for some $c_i > 0$, $i=1,...,n$ along the solutions of~(\ref{MainEq})
\begin{equation*}
\frac{d}{dt} T_i \Big|_{T_i = c_i} < 0, \quad T_i = \langle p_i, p_i \rangle_i = \| p_i \|_i^2.
\end{equation*}
\end{lemma}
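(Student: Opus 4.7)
The plan is to differentiate $T_i = \langle p_i, p_i\rangle_i$ along a solution and show that, for $T_i$ sufficiently large, the negative contribution of the friction term dominates the bounded contributions of $f_i$ and $f_i^{\mathrm{interaction}}$.

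First I would use the compatibility of the Levi-Civita connection $\nabla^i$ with the Riemannian metric $\langle\cdot,\cdot\rangle_i$, together with the second equation in~(\ref{MainEq}), to write
\begin{equation*}
\frac{d}{dt} T_i \;=\; 2\,\langle \nabla^i_{p_i} p_i,\, p_i\rangle_i \;=\; 2\,\langle f_i, p_i\rangle_i + 2\,\langle f_i^{friction}, p_i\rangle_i + 2\,\langle f_i^{interaction}, p_i\rangle_i.
\end{equation*}
The point of this step is to reduce everything to inner products against $p_i$, at which point hypotheses (H1) and (H2) are tailor-made for producing the estimates.

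Next I would invoke (H1): there is an $\alpha_i>0$ such that $\langle f_i^{friction}, p_i\rangle_i \le -\alpha_i\,\langle p_i, p_i\rangle_i$ whenever $T_i>d_i$ and $q_i\in M_i$. By (H2), there is a constant $B_i$ with $\|f_i\|_i,\|f_i^{interaction}\|_i\le B_i$, so Cauchy--Schwarz yields
\begin{equation*}
2\,\langle f_i, p_i\rangle_i + 2\,\langle f_i^{interaction}, p_i\rangle_i \;\le\; 4 B_i\,\|p_i\|_i \;=\; 4 B_i\sqrt{T_i}.
\end{equation*}
Combining these estimates, for $T_i>d_i$ we have $\dot T_i \le -2\alpha_i T_i + 4 B_i \sqrt{T_i}$, and the right-hand side is strictly negative as soon as $\sqrt{T_i} > 2B_i/\alpha_i$.

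Finally I would choose $c_i$ to be any value strictly greater than $\max\{d_i,\,(2B_i/\alpha_i)^2\}$; then $\dot T_i<0$ on the level set $\{T_i=c_i\}$, which is the required conclusion. I do not anticipate a genuine obstacle here: the only mildly delicate point is making sure that in the definition of ``bounded'' in (H2) one really uses the Riemannian norm on the appropriate fibre of $TM_i^+$, so that the Cauchy--Schwarz step above is legitimate; once that is granted, the argument is a direct energy-type estimate.
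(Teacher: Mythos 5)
Your proof is correct and takes essentially the same approach as the paper's: differentiate $T_i$ along a solution using metric compatibility of $\nabla^i$, control the $f_i$ and $f_i^{\mathrm{interaction}}$ terms via Cauchy--Schwarz and (H2), and let the friction term from (H1) dominate once $T_i$ is large. The only difference is cosmetic --- you make the threshold explicit, $c_i > \max\{d_i, (2B_i/\alpha_i)^2\}$, where the paper factors out $2\langle p_i, p_i\rangle_i$ and argues qualitatively that the resulting expression is negative for $c_i$ large enough.
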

\begin{proof}
By direct calculation from~(\ref{MainEq}), we have
\begin{equation*}
\begin{aligned}
\frac{d}{dt} T_i &= 2\langle f_i(t, q_i, p_i) + f_i^{friction}(t, q_i, p_i) + f^{interaction}_i(t, q_1, p_1, ..., q_n, p_n),  p_i \rangle_i \leqslant \\
&\leqslant 2\langle p_i, p_i \rangle_i \left(\frac{\| f_i(t, q_i, p_i) \|_i}{\| p_i \|_i} + \frac{\| f^{interaction}_i(t, q_1, p_1, ..., q_n, p_n)\|_i}{\| p_i \|_i} + \frac{ \langle f_i^{friction}(t, q_i, p_i),  p_i \rangle_i}{\langle p_i, p_i \rangle_i}\right).
\end{aligned}
\end{equation*}
Since $f_i$ and $f_i^{interaction}$ are bounded, then from~(\ref{GammaEq}) we obtain $dT_i/dt < 0$ provided $c_i$ is large enough.
\end{proof}

\subsection{Auxiliary constructions and results}
\label{subsec2_2}

The approach developed in~\cite{srzednicki2005fixed} is based on the ideas of the Wa\.{z}ewski method~\cite{wazewski1947principe} and the Lefschetz-Hopf theorem. In this subsection we introduce some definitions and a result from~\cite{srzednicki2005fixed} which we slightly modify for our use.
\par
Let $v \colon \mathbb{R}\times M \to TM$ be a time-dependent vector field on a manifold $M$
\begin{equation}
\label{eq1}
\dot x = v(t, x).
\end{equation}
For $t_0 \in \mathbb{R}$ and $x_0 \in M$, the map $t \mapsto x(t,t_0,x_0)$ is the solution for the initial value problem for the system~(\ref{eq1}), such that $x(0,t_0,x_0)=x_0$. If $W \subset \mathbb{R}\times M$, $t\in\mathbb{R}$, then we denote
\begin{equation*}
W_t=\{x \in M \colon (t,x) \in W\}.
\end{equation*}
\begin{definition}
Let $W \subset \mathbb{R} \times M$. Define the {exit set} $W^-$ as follows. A point $(t_0,x_0)$ is in $W^-$ if there exists $\delta>0$ such that $(t+t_0, x(t,t_0,x_0)) \notin W$ for all $t \in (0,\delta)$.
\end{definition}
\begin{definition}
We call $W \subset \mathbb{R}\times M$ a {Wa\.{z}ewski block} for the system~(\ref{eq1}) if $W$ and $W^-$ are compact.
\end{definition}
\begin{definition}
A set $W \subset [a,b] \times M$ is called a simple periodic segment over $[a,b]$ if it is a Wa\.{z}ewski block with respect to the system~(\ref{eq1}), $W = [a, b] \times Z$, where $Z \subset M$, and $W^-_{t_1} = W^-_{t_2}$ for any $t_1, t_2 \in [a, b)$.
\end{definition}
\begin{definition}
Let $W$ be a simple periodic segment over $[a,b]$. The set $W^{--} = [a,b] \times W_a^-$ is called the essential exit set for $W$.
\end{definition}
In our case, the result from~\cite{srzednicki2005fixed} can be presented as follows.

\begin{theorem} 
\label{th1}
\cite{srzednicki2005fixed} Let W be a simple periodic segment over $[a,b]$. Then the set
\begin{equation*}
U = \{ x_0 \in W_a \colon x(t-a,a,x_0) \in W_t\setminus W_t^{--}\,\mbox{for all}\,\, t \in [a,b] \}
\end{equation*}
is open in $W_a$ and the set of fixed points of the restriction $x(b-a,a,\cdot)|_U \colon U \to W_a$ is compact. Moreover, if $W_a$ and $W^-_a$ are ANRs then the fixed point index of $x(b-a,a,\cdot)|_U$ can be calculated by means of the Euler-Poincar\'{e} characteristic of $W$ and $W^-_a$ as follows
\begin{equation*}
\mathrm{ind}(x(b-a,a,\cdot)|_U) = \chi(W_a) - \chi(W^-_a).
\end{equation*}
In particular, if $\chi(W_a) - \chi(W^-_a) \ne 0$ then $x(b-a,a,\cdot)|_U$ has a fixed point in $W_a$.
\end{theorem}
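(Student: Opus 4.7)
The plan is to reduce to the Lefschetz--Hopf fixed point theorem by continuously extending the partial Poincar\'e map $x(b-a,a,\cdot)|_U$ to a self-map $\widehat P \colon W_a \to W_a$. The product structure $W = [a,b] \times Z$ canonically identifies $W_a$ with $W_b$, so the time-$(b-a)$ map is naturally a partial self-map of $W_a$, and the essential exit set $W_a^-$ sits in $W_a$ independently of $t$.

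Openness of $U$ is immediate: since $W \setminus W^{--}$ is open in $W$, continuous dependence on initial conditions together with compactness of $[a,b]$ imply the defining condition of $U$ survives small perturbations of $x_0$. Compactness of the fixed point set needs more care. If $x_n \to x_\infty$ with $x_n$ Poincar\'e-fixed and $x_n \in U$, continuous dependence gives $x(b-a,a,x_\infty) = x_\infty$. If the limit orbit touched $W_a^-$ at some $t_0 \in [a,b]$, the very definition of the exit set would force the orbit of $x_n$ to leave $W$ slightly after $t_0$ for large $n$, contradicting $x_n \in U$; hence $x_\infty \in U$ and the fixed point set is closed in the compact set $W_a$.

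To build $\widehat P$, I introduce the exit time $\tau(x_0) \in [a,b]$ --- the first moment the orbit of $x_0$ leaves $W \setminus W^{--}$, or $b$ if it never does on $[a,b]$ --- and the exit-point map $\rho(x_0) = x(\tau(x_0) - a, a, x_0)$. The Wa\.{z}ewski retract lemma, applied with the $t$-constant essential exit set, yields that $\rho$ is continuous on $W_a \setminus U$ with image in $W_a^-$, and that $\rho|_{W_a^-} = \mathrm{id}$. Setting $\widehat P = x(b-a,a,\cdot)$ on $U$ and $\widehat P = \rho$ on $W_a \setminus U$ then produces a continuous self-map of $W_a$ which fixes $W_a^-$ pointwise.

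Finally, for $s \in [0, b-a]$ define $F_s$ analogously by flow-with-retraction up to time $a+s$; this gives a continuous homotopy from $\mathrm{id}_{W_a}$ to $\widehat P$ fixing $W_a^-$ throughout, so by homotopy invariance of the Lefschetz number $L(\widehat P) = L(\mathrm{id}_{W_a}) = \chi(W_a)$. The fixed point set of $\widehat P$ splits into the compact disjoint pieces $W_a^-$ and $\mathrm{Fix}(x(b-a,a,\cdot)|_U)$; additivity of the fixed point index together with a local computation near $W_a^-$ (using that $\widehat P$ is homotopic in a neighborhood of $W_a^-$ to a retraction onto it) gives the contribution $\chi(W_a^-)$ from the first piece, so
\[
\mathrm{ind}(x(b-a,a,\cdot)|_U) = L(\widehat P) - \chi(W_a^-) = \chi(W_a) - \chi(W_a^-),
\]
and a fixed point exists whenever this is nonzero. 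The main obstacle I expect is the continuity of $\rho$: orbits that graze $W_a^-$ tangentially can cause $\tau$ to jump, and one must exploit the product structure of the periodic segment (via the Wa\.{z}ewski retract lemma) to glue the two pieces of $\widehat P$ into a genuine continuous self-map.
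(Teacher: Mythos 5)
You should first be aware that the paper contains no proof of Theorem~\ref{th1}: it is imported verbatim (in slightly specialized form) from \cite{srzednicki2005fixed}, so your attempt can only be measured against the argument in that reference, not against anything in this paper. Measured that way, your outline follows the same Wa\.{z}ewski-method strategy: extend the partial Poincar\'{e} map over $U$ by the exit-point retraction $\rho$, deform the glued map $\widehat P$ to $\mathrm{id}_{W_a}$ along the flow, and split $\Lambda(\widehat P)=\chi(W_a)$ by additivity of the fixed point index, the piece near $W_a^-$ contributing $\chi(W_a^-)$ via the contraction property (your ``local computation''). Note that the theorem in \cite{srzednicki2005fixed} is proved for general periodic segments, where a monodromy homeomorphism $m$ of the pair $(W_a,W_a^{--})$ replaces the identity and the right-hand side is a difference of Lefschetz numbers $\Lambda(m)-\Lambda(m|_{W_a^{--}})$; the product structure of a \emph{simple} segment forces $m=\mathrm{id}$, which is exactly why your absolute, non-relative version is adequate here.

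The weak point is the one you flag and then defer wholesale to ``the Wa\.{z}ewski retract lemma'': continuity of $\rho$, hence of $\widehat P$ and of the homotopy $F_s$. Under the paper's definition of $W^-$ (the orbit must stay \emph{outside} $W$ on a whole interval $(0,\delta)$), two facts have to be verified before any such lemma applies. First, grazing is impossible: for $t\in[a,b)$ the definition of a simple segment gives $W_t^{--}=W_a^-=W_t^-$, so an orbit touching $W^{--}$ at a time $t<b$ leaves $W$ immediately afterwards; this yields upper semicontinuity of the exit time, shows $\overline{U}\cap W_a^-=\emptyset$, and shows that points of $\overline{U}\setminus U$ have exit time exactly $b$ --- which is precisely what makes your two halves of $\widehat P$ agree where they meet. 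Second --- and this step is absent from your proposal --- any orbit lying outside $W$ at some time $t'\le b$ must have touched $W^{--}$ beforehand; this is \emph{not} automatic, because a first-exit point need not a priori satisfy the persistent-exit condition defining $W^-$. It is proved by setting $u=\sup\{s<t'\colon x(s-a,a,x_0)\in W_s\}$: the orbit is then outside $W$ on all of $(u,t')$, so $(u,x(u-a,a,x_0))\in W^-$ by the definition, and since $u<b$ its space component lies in $W_a^-\subset W^{--}_u$. With these two facts your exit time is both lower and upper semicontinuous on $W_a\setminus U$, the gluing is continuous, and the index computation $\mathrm{ind}(x(b-a,a,\cdot)|_U)=\chi(W_a)-\chi(W_a^-)$ goes through as you describe. (One further small repair: in your compactness argument for the fixed point set, the case where the limit orbit meets $W^{--}$ only at $t_0=b$ must be reduced to $t_0=a$ using that the limit point is fixed, i.e. $x_\infty=x(b-a,a,x_\infty)\in W_a^-$.)
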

\newpage
\subsection{Main theorem}
\label{subsec2_3}

In this subsection, we prove our main result and illustrate it with a series of examples. Below we use the following notations: $M = M_1 \times ... \times M_n$ and $M^+ = M^+_1 \times ... \times M^+_n$.

\begin{theorem}
\label{MyMainTh}
Suppose that for~(\ref{MainEq}) the following conditions are satisfied
\begin{enumerate}
\item The Euler-Poincar\'{e} characteristic of $M_i$ is non-zero for all $i = 1,...,n$.
\item (H1) and (H2) are satisfied.
\item For any $t_0 \in \mathbb{R}$, $(q^0_1, p^0_1,...,q^0_n,p_n^0) \in T(\partial M)$ there is an $\varepsilon > 0$ such that
\begin{equation}
\label{Cond-IV}
q(t,t_0,q^0_1, p^0_1,...,q^0_n,p_n^0) \notin M, \quad \mbox{for all} \quad t \in (0, \varepsilon).
\end{equation}
\end{enumerate}
Then there exists a solution $(q_1, p_1, ..., q_n, p_n) \colon \mathbb{R} \to TM^+$ of~(\ref{MainEq}) such that for all $i=1,...,n$
\begin{equation*}
q_i(t) = q_i(t+T), \quad p_i(t) = p_i(t+T), \quad q_i(t) \in M_i \setminus \partial M_i, \quad\mbox{for all}\quad t \in \mathbb{R}.
\end{equation*}
\end{theorem}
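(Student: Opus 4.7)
The plan is to apply Theorem~\ref{th1} to a carefully chosen simple periodic segment whose cross-section couples the positional constraint $q \in M = M_1 \times \cdots \times M_n$ with the velocity bound supplied by Lemma~\ref{Lemma-I}. First I would fix the constants $c_i$ provided by Lemma~\ref{Lemma-I} and set
\[
Z_i = \{(q_i,p_i) \in TM_i^+ : q_i \in M_i,\ \|p_i\|_i^2 \leq c_i\}, \quad Z = Z_1 \times \cdots \times Z_n, \quad W = [0,T] \times Z.
\]
Each $Z_i$ is the restriction to $M_i$ of the closed disk subbundle of $TM_i^+$ of radius $\sqrt{c_i}$: a compact manifold with corners that deformation-retracts onto $M_i$ via the zero section, while its positional slice $\{(q_i,p_i) \in Z_i : q_i \in \partial M_i\}$ retracts onto $\partial M_i$. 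In particular $W_0 = Z$ and its relevant subsets are compact ANRs, which is what Theorem~\ref{th1} needs.

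The next and most substantial step is the exit-set analysis. The topological boundary of $Z$ in $TM^+$ decomposes into a velocity piece, on which some $\|p_i\|_i^2 = c_i$, and a positional piece, on which some $q_j \in \partial M_j$. Lemma~\ref{Lemma-I} says that along any solution of~(\ref{MainEq}) the level set $\|p_i\|_i^2 = c_i$ is crossed strictly inward, so such points are \emph{not} exit points; condition~(3) says exactly that positional-boundary points leave $M$, and therefore $W$, instantaneously. Combined, these observations give
\[
W^- = [0,T] \times \{(q_1,p_1,\dots,q_n,p_n) \in Z : q_j \in \partial M_j \text{ for some } j\},
\]
which is compact, has product form in $t$, and is time-independent, so $W$ is a simple periodic segment with $W^{--} = W^-$.

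Next I would compute $\chi(W_0) - \chi(W_0^-)$. Through the zero-section retractions this reduces to $\chi(M) - \chi(\partial M)$. An inductive use of the identity
\[
\chi(\partial(A \times B)) = \chi(\partial A)\chi(B) + \chi(A)\chi(\partial B) - \chi(\partial A)\chi(\partial B)
\]
gives the factorization $\chi(M) - \chi(\partial M) = \prod_{i=1}^{n}\bigl(\chi(M_i) - \chi(\partial M_i)\bigr)$. A short case check using hypothesis~(1) in dimensions $1$ and $2$ shows every factor is nonzero: $\partial M_i$ is either empty (giving $\chi(M_i) \neq 0$), the two endpoints of an arc (giving $1-2 = -1$), or a disjoint union of circles (giving $\chi(M_i) - 0 \neq 0$). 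Theorem~\ref{th1} then furnishes a fixed point of the time-$T$ Poincar\'{e} map whose full-period orbit stays in $W_t \setminus W_t^{--}$; extending by $T$-periodicity yields the required solution with $q_i(t) \in M_i \setminus \partial M_i$ for all $t$.

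The step I expect to require the most care is the exit-set analysis at the corners where the velocity and positional boundaries of $Z$ meet: one has to verify that Lemma~\ref{Lemma-I} and condition~(3) together still yield $W^- = \{q \in \partial M\}$ at these non-smooth points, so that the compactness and strict product structure demanded by the definition of a simple periodic segment really hold. Once the block is set up correctly, the Euler-characteristic count becomes routine combinatorics and the ANR hypothesis is automatic.
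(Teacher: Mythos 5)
Your overall strategy is the same as the paper's --- the same block $W$ built from the constants of Lemma~\ref{Lemma-I}, the same appeal to Theorem~\ref{th1}, and an equivalent Euler-characteristic count (your product formula $\chi(M)-\chi(\partial M)=\prod_i\bigl(\chi(M_i)-\chi(\partial M_i)\bigr)$ is a cleaner packaging of the paper's inclusion--exclusion with binomial coefficients). However, your identification of the exit set is wrong, and it rests on a misreading of hypothesis~(3). You claim $W^-$ consists of \emph{all} points of $Z$ with some $q_j \in \partial M_j$. Consider instead a point where $q_j \in \partial M_j$ but the normal component of $p_j$ points strictly inward, while every other coordinate lies in the interior with $\|p_i\|_i^2 < c_i$: the solution through such a point moves into the interior of $W$, so it is \emph{not} an exit point. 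Hypothesis~(3) cannot rescue your claim, because $T(\partial M)$ there denotes the tangent bundle of the manifold $\partial M$ --- initial velocities \emph{tangent} to the boundary strata --- not arbitrary velocities over boundary points; under your reading the hypothesis would be unsatisfiable whenever $\partial M \neq \emptyset$ (an inward initial velocity always drives $q$ into the interior of $M$), and indeed the paper's examples verify it only at tangential data, e.g.\ at $\dot x_i = 0$. The correct essential exit set, as in the paper, is $\bigcup_j V_j^{--}$, where $V_j^{--}$ requires $q_j \in \partial M_j$ \emph{and} $\langle \nu_{q_j}, p_j\rangle_j \geqslant 0$; the entire role of hypothesis~(3) is to settle the borderline case $\langle \nu_{q_j}, p_j\rangle_j = 0$, forcing those points to be exit points so that the exit set is closed, hence compact, hence $W$ is a genuine simple periodic segment. (A smaller slip of the same kind: $W^{--} \neq W^-$, since the terminal slice $\{T\}\times Z$ lies in $W^-$ but not in $W^{--}$.)

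The good news is that the error does not propagate to the index computation: over each boundary stratum $M_1 \times \cdots \times \partial M_j \times \cdots \times M_n$ the fiber of $V_j^{--}$ is a half-disk $\{\|p_j\|_j^2 \leqslant c_j,\ \langle\nu_{q_j},p_j\rangle_j \geqslant 0\}$ times full disks in the remaining factors, which is still contractible, so $V_j^{--}$ and all its intersections retract onto exactly the same strata as your full-disk sets, and your count $\chi(W_0)-\chi(W_0^-) \neq 0$ goes through verbatim. One further consequence of the correction deserves a sentence in your last step: with the correct $W^{--}$, the set $W_t \setminus W_t^{--}$ still contains points lying over $\partial M$ (those with strictly inward normal velocity), so interiority of the periodic orbit is not immediate from Theorem~\ref{th1}; it follows because a $T$-periodic orbit touching $\partial M_j$ with strictly inward velocity at some time would have been outside $M_j$ just before that time, contradicting that the orbit stays in $W_t$ for all $t$.
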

\begin{proof}
Consider the following compact subset $W$ of~$[0,T]\times TM^+$
\begin{equation*}
W = \{ 0 \leqslant t \leqslant T, (q_1, p_1,...,q_n, p_n) \in TM^+ \colon (q_1, ...,  q_n) \in M_1 \times ... \times M_n, \langle p_i, p_i \rangle_i \leqslant c_i, i=1,...,n \}
\end{equation*}
where $c_i>0$ are the constants obtained from~lemma~\ref{Lemma-I}.

From~lemma~\ref{Lemma-I} we also have that if $(t,q_1,p_1,...,q_n,p_n) \in W^{--}$ then $(q_1,...,q_n) \in \partial M$. Since
\begin{equation}
\partial M = \partial M_1 \times M_2 \times ... \times M_n \cup ... \cup M_1 \times ... M_{n-1} \times \partial M_n,
\end{equation}
then for $(t,q_1,p_1,...,q_n,p_n) \in W^{--}$ there exists $i$ such that $q_i \in \partial M_i$.
Let $\nu_{q_i} \in T_{q_i} M_i^+$ be a normal vector to $\partial M_i$ at point $q_i \in \partial M_i$ such that, for all $p_i \in T_{q_i} M_i^+$, $\langle \nu_{q_i}, p_i \rangle_i > 0$, the solution starting from $(t,q_1,p_1,...,q_i,p_i,...,q_n,p_n)$ at least locally leaves $M$. From the above definition of $\nu_{q_i}$, we obtain for any $j$
\begin{equation*}
\begin{aligned}
W^{--} \supset \{  0 &\leqslant t \leqslant T, (q_1, p_1,...,q_n,p_n) \in TM^+ \colon\\
&(q_1,...,q_n) \in M, \langle p_i, p_i \rangle_i \leqslant c_i, i=1,...,n , q_j \in \partial M_j, \langle\nu_{q_j}, p_j\rangle_j > 0 \}.
\end{aligned}
\end{equation*}
Let us denote by $V^{--}_j$, $j=1,...,n$ the following set
\begin{equation*}
\begin{aligned}
V^{--}_j =\{  0 &\leqslant t \leqslant T, (q_1, p_1,...,q_n,p_n) \in TM^+ \colon\\
&(q_1,...,q_n) \in M,\langle p_i, p_i \rangle_i \leqslant c_i, i=1,...,n, \langle\nu_{q_j}, p_j\rangle_j \geqslant 0, q_j \in \partial M_j\}.
\end{aligned}
\end{equation*}
Since we assume~(\ref{Cond-IV}), then $W^{--}$ is compact:
\begin{equation}
\label{sum}
\begin{aligned}
W^{--} = V^{--}_1 \cup ... \cup V^{--}_n.
\end{aligned}
\end{equation}
\begin{figure}[h!]
\centering
\def\svgwidth{280 pt}
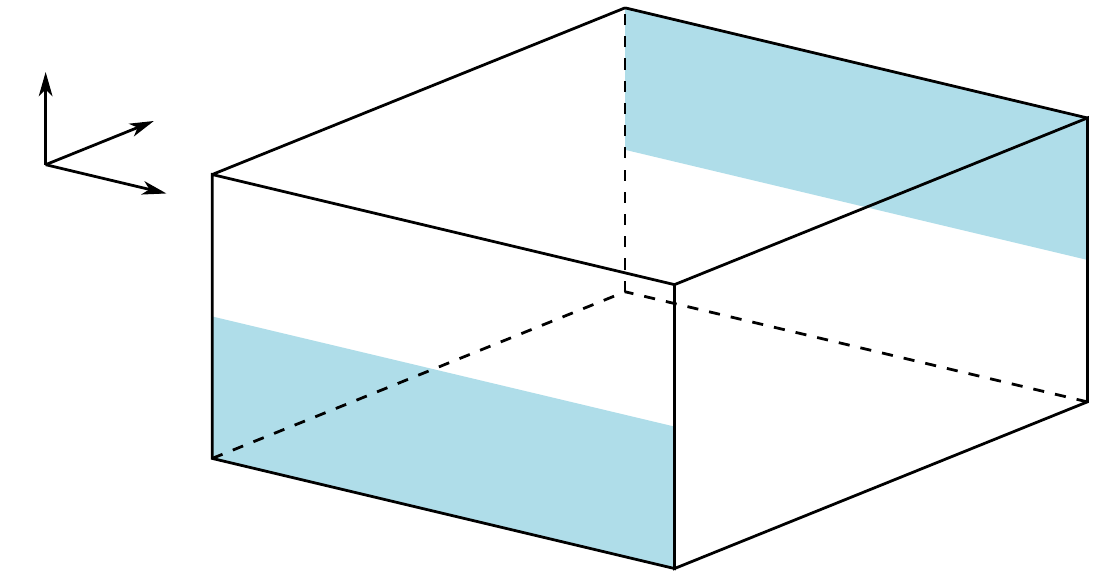
\caption{For a one-dimensional manifold parametrized by $q \in [q_1, q_2]$, $W$ has the above form.}
\label{fig5}
\end{figure}
Therefore, $W^{-}$ is also compact and $W$ is a simple periodic segment over $[0,T]$. 
Clearly, $V^{--}_j$ is homotopic to $M_1 \times ... \times \partial M_j \times ... \times M_n$. Moreover, $V^{--}_{j_1} \cap ... \cap V^{--}_{j_m}$ is homotopic to $M_1 \times ... \times \partial M_{j_1} \times ... \times \partial M_{j_m} \times...\times M_n$.

If $M_i$ has a nonempty boundary, then $\partial M_i$ consists either of a finite number of curves that are homeomorphic to circles ($\chi(\partial M_i) = 0$) or it is a two-pointed set ($\chi(\partial M_i) = 2$). Suppose, without loss of generality, that for some $k$ $\chi(\partial M_i) = 0$ iff $i>k$, i.e. there are $k\geqslant 0$ one-dimensional manifolds in $\{M_i\}$. From~(\ref{sum}) we have
\begin{equation*}
\begin{aligned}
\chi(W^{--}) &= \sum_{1 \leqslant i \leqslant n} \chi(V_i^{--}) - \sum_{1 \leqslant i < j \leqslant n} \chi(V_i^{--}\cap V_j^{--}) + ... +(-1)^{n-1} \chi (V_1^{--}\cap ... \cap V_n^{--})\\
&=\sum_{1 \leqslant i \leqslant k} \chi(V_i^{--}) - \sum_{1 \leqslant i < j \leqslant k} \chi(V_i^{--}\cap V_j^{--}) + ... +(-1)^{k-1} \chi (V_1^{--}\cap ... \cap V_k^{--})\\
&=\chi(M_1 \times ... \times M_n)\cdot(2C^1_k - 2^2C^2_k + ... + 2^k (-1)^{k-1} C_k^k)=\chi(M)\cdot(1+(-1)^{k+1}).
\end{aligned}
\end{equation*}
Here we use that if $j_1 < ... < j_m \leqslant k$, then $$\chi(M_1 \times ... \times \partial M_{j_1} \times ... \times \partial M_{j_m} \times...\times M_n) = 2^m \chi(M_1 \times ... \times M_n).$$
Finally,  $\chi(W_0) = \chi(M) = \chi(M_1) ... \chi(M_n) \ne 0$, $\chi(W^{--}) = \chi(W^-_0)$. Therefore, $$\chi(W_0) - \chi(W_0^-) \ne 0$$ and we can apply~theorem~\ref{th1}.
\end{proof}
Let us now illustrate the theorem on examples. 
\begin{example}
\label{ex1main}
Consider a finite number of planar pendulums moving with viscous friction in a gravitational field (Fig.~\ref{fig1}). Let $r_i$ be the radius-vector from a fixed origin to the massive point of the $i$-th pendulum. Suppose that the pivot points of the pendulums are moving along a horizontal line in accordance with a $T$-periodic law of motion $h \colon \mathbb{R}/T\mathbb{Z} \to \mathbb{R}$, which is the same for all pendulums. Let us also assume the following: for any two pendulums the distance between their pivot points is greater than the sum of their lengths, i.e. the pendulums are disjoint; for any two pendulums there is a repelling force $F_{ij}$ acting on the massive point of the $i$-th pendulum from the $j$-th pendulum ($F_{ij}$ is parallel to $r_i - r_j$).
\begin{figure}[h!]
\centering
\def\svgwidth{400 pt}
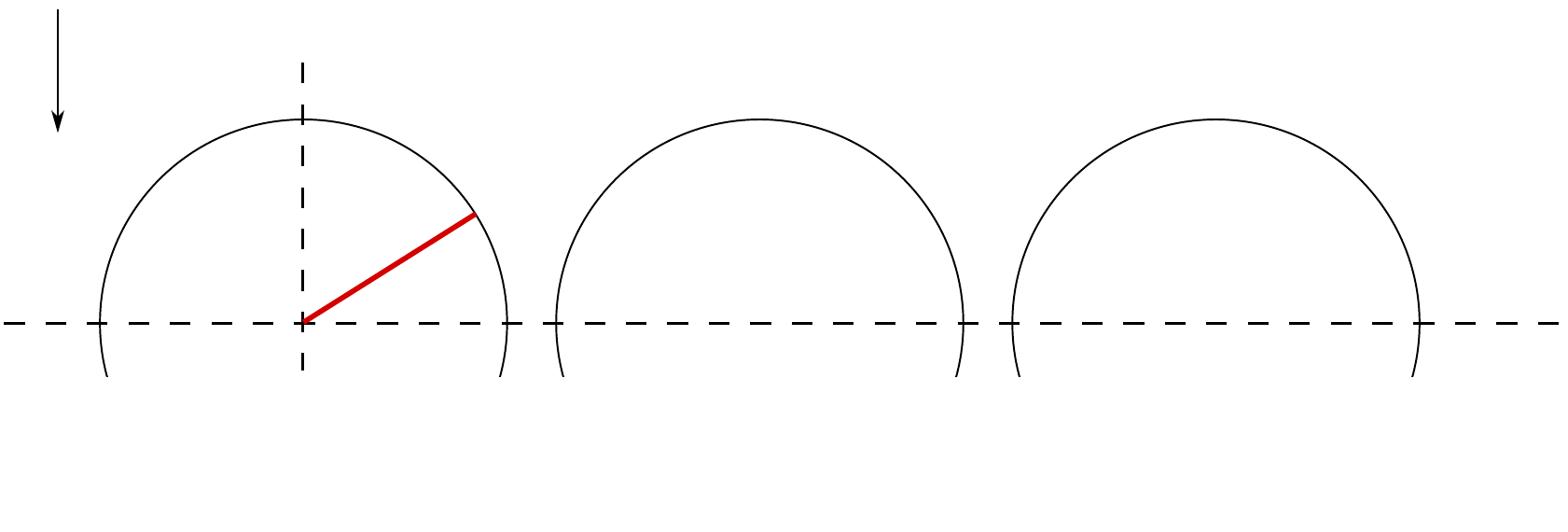
\caption{When the $i$-th pendulum is horizontal, the repelling forces acting on it are directed downward.}
\label{fig1}
\end{figure}

The equations of motion can be presented in the following form:
\begin{equation}
\label{ex1}
\begin{aligned}
&\dot\varphi_i = p_i,\\
&\dot p_i = -\frac{\ddot h}{l_i}\cos\varphi_i + \frac{g}{l_i}\sin\varphi_i - \gamma_i p_i + \frac{1}{m_i l_i}\sum\limits_{j \ne i} (F_{ij}, e_{\varphi_i}).
\end{aligned}
\end{equation}
Here $m_i$ is the mass of the $i$-th massive point; $l_i$ is the length of the $i$-th pendulum; $\gamma_i > 0$ is the viscous friction coefficient for the $i$-th pendulum (divided by the mass $m_i$); $e_{\varphi_i}$ is the following unit vector $e_{\varphi_i} = (\cos\varphi_i, -\sin\varphi_i)$; for given $\varphi_i$, $\varphi_j$, $p_i$, $p_j$, we assume that
$$
F_{ij}(\varphi_i, p_i, \varphi_j, p_j) = \frac{r_i - r_j}{|r_i - r_j|}\cdot f_{ij}(\varphi_i, \varphi_j),
$$
where $f_{ij} \geqslant 0$ is a smooth real-valued function. Then there exists a $T$-periodic solution~of~(\ref{ex1}).

Indeed, let $M_i$ be a compact manifold defined by the inequalities $-\pi/2 \leqslant \varphi_i \leqslant \pi/2$, $\chi(M_i)=1$. From~(\ref{ex1}) it follows that
\begin{equation}
\begin{aligned}
&\dot p_i \big|_{\varphi_i = \pi/2, p_i = 0} = \frac{g}{l_i} + \frac{1}{m_i l_i} \sum\limits_{j \ne i} (F_{ij}, e_{\varphi_i})\big|_{\varphi_i = \pi/2} > 0,\\
&\dot p_i \big|_{\varphi_i = -\pi/2, p_i = 0} = -\frac{g}{l_i} + \frac{1}{m_i l_i} \sum\limits_{j \ne i} (F_{ij}, e_{\varphi_i})\big|_{\varphi_i = -\pi/2} < 0.
\end{aligned}
\end{equation}
Therefore, theorem~\ref{MyMainTh} can be applied. Let us also note that along the obtained periodic solution we always have $\varphi_i \in (-\pi/2, \pi/2)$, i.e. the pendulums always remain above the horizontal line.
\end{example}
\begin{remark}
Note that if in the previous example we replace the half-circles by any compact manifolds with boundaries such that their Euler-Poincar\'{e} characteristics are non-zero, the boundaries are in a horizontal plane, the rest of the manifolds are above the horizontal plane and they are vertical at the boundaries, then, for such a system, there also exists a periodic solution. Moreover, we can consider arbitrary external periodic fields acting on the massive points instead of the periodic motion of the surfaces. If the external fields are directed downward at the horizontal plane then there exists a periodic solution (Fig.~\ref{fig2}).
\end{remark}
\begin{figure}[h!]
\centering
\def\svgwidth{320 pt}
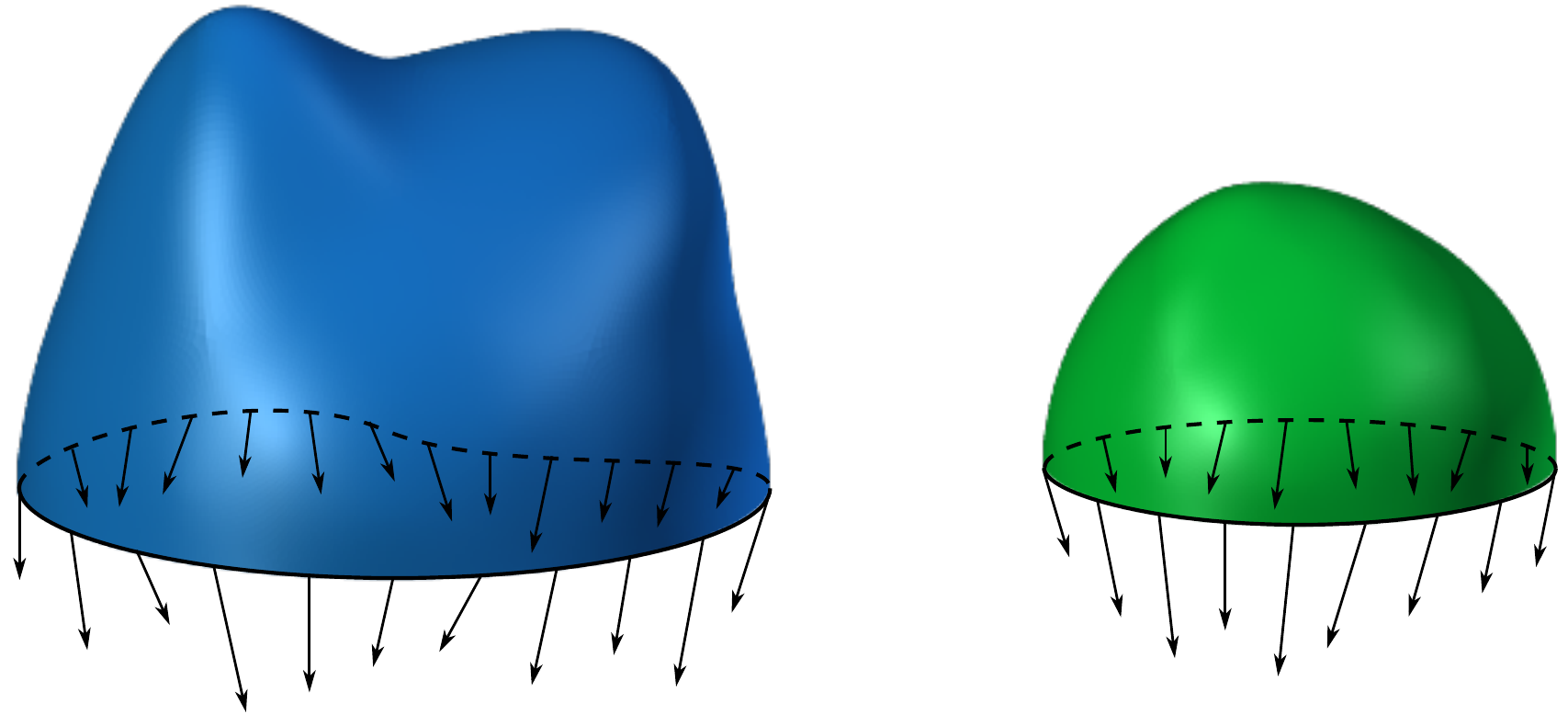
\caption{The points are repelling; the external fields $F_1$ and $F_2$ are periodic and directed downward at the boundaries.}
\label{fig2}
\end{figure}

\begin{remark}
One can also consider systems of coupled pendulums in which the pendulums attract each other. An example of such a system is presented in the figure below (Fig.~\ref{fig3}). Here we assume that the massive points of the pendulums are charged, they interact with the Coulomb interaction and the pendulums are located in a parallel electric field $E$. This system is similar to example~\ref{ex1main} and also has a periodic solution.
\begin{figure}[h!]
\centering
\def\svgwidth{400 pt}
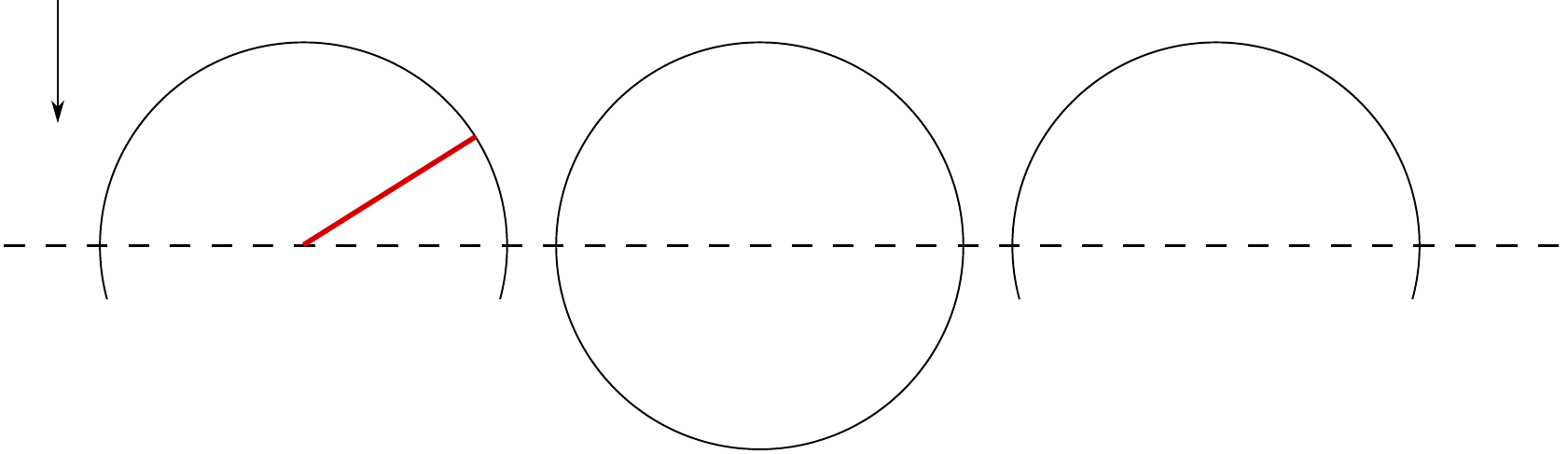
\caption{An example of the system with a periodic solution. The pendulums attracts each other and their pivot points are moving periodically along the horizontal line.}
\label{fig3}
\end{figure}
\end{remark}

\begin{example}
Consider a chain of coupled oscillators in an external periodic field. We suppose that the oscillators are located along a straight line, i.e. every oscillator has at most two neighbours. For the boundary oscillators, which have only one neighbour, we assume that the corresponding massive points are fixed. We also assume that the oscillators are moving with viscous friction and the interaction between them is described by a Morse potential (Fig.~\ref{fig4}). For simplicity, let $m_i = 1$ and $\gamma > 0$ is the same viscous friction coefficient for all oscillators. The equations of motion can be written as follows:
\begin{equation}
\label{exosc}
\ddot x_i = -\gamma \dot x_i - \frac{\partial V(x_i - x_{i-1})}{\partial x_i} - \frac{\partial V(x_{i+1} - x_i)}{\partial x_i} + F(t, x_i), \quad i=1,...,n,
\end{equation}
where
\begin{equation*}
V \colon \mathbb{R} \to \mathbb{R},\quad V(x) = \frac{1}{2}(1 - e^{-(x - \delta)})^2, 
\end{equation*}
$F \colon \mathbb{R}/T\mathbb{Z} \times \mathbb{R} \to \mathbb{R}$ and $\gamma > 0$, $\delta > 0$, $x_0 = 0$ and $x_{n+1} = (2n+1)(\delta + a)$, $a \geqslant \ln 2$ is a parameter.
\begin{figure}[h!]
\centering
\def\svgwidth{400 pt}
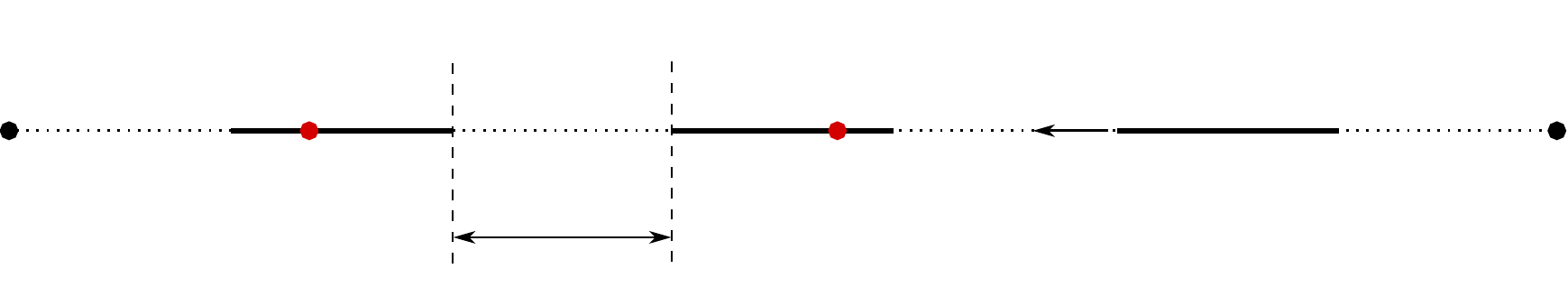
\caption{For $n = 3$ we have three compact manifolds with boundaries. The manifolds $M_i$ are distanced from each other so the oscillators attract one another; $\ddot x_i$ is directed outward at the boundary $\partial M_i$.}
\label{fig4}
\end{figure}

\noindent Let us now show that if the function $F$ satisfies the following condition
\begin{equation}
\label{excond214}
(-1)^{k+1}\cdot F(t, k(\delta + a))<0, \mbox{ for all}\quad t \in \mathbb{R}/T\mathbb{Z}, k = 1,...,2n,
\end{equation}
then the system~(\ref{exosc}) has a periodic solution. Indeed, let $M_i$ be the following compact manifold with a boundary
$$
M_i = \{ x_i \in \mathbb{R} \colon (2i-1)(\delta + a) \leqslant x_i \leqslant 2i(\delta + a) \}.
$$
Since $\left.\frac{\partial^2 V}{\partial x^2}\right|_{x = \delta + \ln 2} = 0$ and $\frac{\partial^2 V}{\partial x^2} < 0$ for $x > \delta + \ln 2$, then 
$$
\left.\frac{\partial V}{\partial x}\right|_{x = x_2} \leqslant \left.\frac{\partial V}{\partial x}\right|_{x = x_1} \quad \mbox{if} \quad \delta + \ln 2 \leqslant x_1 \leqslant x_2.
$$
Finally, taking into account~(\ref{excond214}), we obtain that $\ddot x_i$ is directed outward to $M_i$ at the boundary $\partial M_i$ provided $\dot x_i = 0$ and~\ref{MyMainTh} can be applied.
\end{example}

\section{Conclusion}
\label{sec3}



Taking into account that the presented approach for studying forced oscillations is relatively simple, and, at the same time, it allows one to use a wide range of `building blocks' and types of forces to construct a physical system that has a periodic solution, we hope that our result will be useful in applications.



\bibliographystyle{elsarticle-num}


\bibliography{sample}

\end{document}